\newtheoremstyle{mytheoremstyle} 
    {5pt}                    
    {5pt}                    
    {\itshape}                   
    {\parindent}                           
    {\bf}                   
    {.}                          
    {.5em}                       
    {}  
\theoremstyle{mytheoremstyle}
\newtheorem{theorem}{Theorem}[section]
\newtheorem{lemm}[theorem]{Lemma}
\newtheorem{prop}[theorem]{Proposition}
\newtheorem{coro}[theorem]{Corollary}
\newtheoremstyle{mytdefintionstyle} 
    {5pt}                    
    {5pt}                    
    {\rm}                   
    {\parindent}                           
    {\bf}                   
    {.}                          
    {.5em}                       
    {}  
\theoremstyle{remark}
\newtheorem{rmrk}[theorem]{Remark}
\theoremstyle{mytdefintionstyle}
\newtheorem{exmp}[theorem]{Example}
\newtheoremstyle{exmp_contd}
    {5pt}                    
    {5pt}                    
    {\rm}                   
    {\parindent}                           
    {\bf}                   
    {.}                          
    {.5em}                       
    {\thmname{#1}\ \thmnumber{ #2}\thmnote{#3}\ (continued)}  
\theoremstyle{exmp_contd}
\newcommand\nameft\textrm
\DeclareMathOperator{\Spec}{Spec}
\newcommand{\Q}{\mathbb{Q}}
\newcommand{\Z}{\mathbb{Z}}
\newcommand\N{\mathbb{N}}
\renewcommand\phi{\varphi}
\definecolor{darkgray}{rgb}{0.3,0.3,0.3}
\definecolor{darkgreen}{rgb}{0.008,0.617,0.067}
\definecolor{brown}{rgb}{0.6,0.4,0.2}
  \newif\ifjournalversion%
\author{Mohamed Barakat}
\address{Department of mathematics\\University of Siegen\\57068 Siegen, Germany}
\email{\href{mailto:Mohamed Barakat <mohamed.barakat@uni-siegen.de>}{mohamed.barakat@uni-siegen.de}}
\author{Markus Lange-Hegermann}
\address{Department of Electrical Engineering and Computer Science\\Ostwestfalen-Lippe University of Applied Sciences\\32657 Lemgo, Germany}
\email{\href{mailto:Markus Lange-Hegermann <markus.lange-hegermann@th-owl.de>}{markus.lange-hegermann@th-owl.de}}
\author{Sebastian Posur}
\address{Lehrstuhl für Algebra und Darstellungstheorie\\RWTH-Aachen University\\52062 Aachen, Germany}
\email{\href{mailto:Sebastian Posur <posur@art.rwth-aachen.de>}{posur@art.rwth-aachen.de}}
\begin{document}

\title[Elimination via saturation]{Elimination via saturation}

\begin{abstract}
This short paper presents saturation-based algorithms for homogenization and elimination.
This algorithm can compute elimination ideals by using syzygies and ideal membership test, hence it works with \emph{any} monomial order, in particular without the use of block-elimination orders.
The used saturation is a translation of the geometric fact that the projective closure of an affine scheme has no components in the hyperplane at infinity.
\end{abstract}

\keywords{%
Elimination,
homogenization,
saturation,
ideal quotients,
syzygies,
projective closure,
\nameft{Gröbner} bases%
}
\subjclass[2010]{%
13D02, 
13P10, 
13P15, 
68W30, 
14Q99, 
}
\maketitle


\section{Introduction}

Elimination has many applications, for example solving systems of equations, computing primary decompositions, and computing Zariski closures of images of morphisms of varieties.
Furthermore, using elimination, one can compute ideal quotients\footnote{Cf.~\cite[Exer.~15.41]{Eis} or \cite[Lemma 2.3.11]{AL}.} and saturations of ideals, either by iterating ideal quotients or using ``Rabinowitch's trick''\footnote{Cf.~\cite[Exer.~2.3.14.c]{AL}.}.
This leads us to our first question: \\
\textbf{Q1.} \emph{Is the converse true, i.e., can one use saturation to eliminate?}

Eisenbud writes in \cite[Chapter~15]{Eis}: ``The kinds of problems that can be attacked with Gröbner bases can be very roughly divided into two groups: constructive module theory and elimination theory.''
This motivates our second question: \\
\textbf{Q2.}
\emph{Is constructive elimination theory a special case of constructive module theory?}

In elimination theory one computes Gröbner bases w.r.t.\  a block-elimination order.
For the module theoretic applications any global order can be used, e.g, the degree-reverse-lexicographic order.
So the second question can be reformulated as follows: \\
\textbf{Q2'.} \emph{Can one use the degree-reverse-lexicographic order for elimination?}

This paper answers all of the above questions affirmatively.

\smallskip
Geometrically speaking, we are interested in the Zariski closure of the image of an affine subscheme $X \subset \mathbb{A}^n_B$ (over a base ring $B$) under the affine projection morphism $\pi: \mathbb{A}^n_B \twoheadrightarrow \Spec B$.
The idea employed in this paper is to factor\footnote{Such a factorization is a standard technique in intersection theory, e.g., the starting point of Grothendieck's proof of the Grothendieck-Riemann-Roch theorem.} $\pi$ over $\mathbb{P}^n_B$
\begin{center}
  \begin{tikzpicture}
  \coordinate (r) at (2.3,0);
  \coordinate (d) at ( 0,-1.3);
  
  \node(X) {$X$};
  \node (C) at ($(X)+(r)$) {$\overline{X}$};
  \node (pX) at ($(C)+1.3*(r)$) {$\overline{\pi}(\overline{X})=\overline{\pi(X)}$};
  \node (A) at ($(X)+(d)$) {$\mathbb{A}^n_B$};
  \node (P) at ($(A)+(r)$) {$\mathbb{P}^n_B$};
  \node (B) at ($(P)+1.3*(r)$) {$\Spec B$,};
  
  \draw[right hook-stealth'] (X) -- (C); 
  \draw[-doublestealth] (C) -- (pX);
  \draw[right hook-stealth'] (A) -- node[above]{\footnotesize{open}} (P);
  \draw[-doublestealth] (P) -- node [above] {$\overline{\pi}$} (B);
  \draw[right hook-stealth'] (X) -- (A);
  \draw[right hook-stealth'] (C) -- (P);
  \draw[right hook-stealth'] (pX) -- (B);
  
  \node at ($(pX)+1/2*(d)+1.5*(r)$) {\footnotesize (the vertical morphisms are closed)};
  
  \end{tikzpicture}
\end{center}
and to use the fact that the projective closure $\overline{X}$ (of $X$ in $\mathbb{P}^n_B$) has no components in the hyperplane at infinity $H := \mathbb{P}^n_B \setminus \mathbb{A}^n_B$ in order to represent $\overline{X}$ (non-canonically) as the Zariski closure of the set-theoretic difference between a projective scheme $Y \supset X$ and $H$, where $Y \subset \mathbb{P}^n_B$ is the vanishing set of the homogenization of a generating set of the vanishing ideal $\mathcal{I}(X)$.

\smallskip
Translating the projective closure from geometry to algebra, our approach to elimination of ideals (or submodules, cf.\ \cref{subsection_submodules}) in polynomial rings uses homogenization.
Homogenizing an ideal $I = \mathcal{I}(X)$ in the relative case ($B$ not a field) usually relies on computing a Gröbner basis w.r.t.\  a block-elimination order (cf., e.g., \cite[Prop.~15.31]{Eis}), and it is precisely this order which we want to offer an alternative to.
Instead, we show that the homogenized ideal $I^h$ can be computed by homogenizing any generating set of $I$ followed by a saturation w.r.t.\  the new homogenizing indeterminate.
The original elimination of the ideal $I$ can now be computed by evaluating certain indeterminates to zero in a generating set of $I^h$.

More formally, let $R := B[x_1,\ldots, x_n]$ be a polynomial ring over the commutative ring $B$.
We show how to compute the intersection $I\cap B$ for an ideal $I\unlhd R$ given by a generating set $G\subset I$, i.e., how to eliminate the indeterminates $x_1,\ldots, x_n$, without using a block-elimination order.
We base the alternative algorithm on the observation that $I \cap B$ is equal to the intersection of $B$ with the homogenization of $I$.
More precisely, for $f \in R$ denote by $f^h\in S := B[x_0,x_1,\ldots, x_n]$ the homogenization of $f$ w.r.t.\  the new indeterminate $x_0$.
The above mentioned equality reads $I \cap B = I^h \cap B$ (\Cref{lemm:intersection0}), where $I^h = \langle f^h \mid f \in I \rangle \unlhd S$ denotes the homogenization of $I$.
The homogenization $I^h$ can be computed using the equality $I^h = \langle g^h \mid g \in G \rangle: x_0^\infty$ (\Cref{prop:sat0}).
Finally, we obtain a generating set of the elimination ideal $I \cap B = I^h \cap B \unlhd B$ by simply evaluating $x_0,\ldots,x_n$ to zero in a generating set of $I^h$ (\Cref{lemm:degreezeropart}).

The above statements are proved in \Cref{sec:correctness} with no further assumption on the unital base ring $B$ other than being commutative.
In \Cref{sec:alt_proofs} we give alternative, more conceptual proofs of \Cref{prop:sat0} and \Cref{lemm:intersection0}.

Using homogenization to transfer a given computational task to an easier one is a common principle (see, e.g., \cite[Tutorial 51]{KR2}).
In contrast to the usual dehomogenization of the result of a homogeneous computation, we take global sections by simply evaluating indeterminates to zero.

\smallskip
Algorithmically speaking, the only computationally non-trivial step in the above sketched algorithm is the saturation w.r.t.\  $x_0$.
It is well-known that saturations can be computed by iterating ideal quotients, each of which can be computed using syzygies, for certain classes of base rings\footnote{for which the iterated ideal quotients stabilizes after finitely many steps.} $B$ (cf.~\cite[\S3.8]{AL} or \cite[\S15.10.6]{Eis}):

In case $B = k[b_1,\ldots,b_m]$ is a polynomial ring over a computable field $k$ we can use any term order of $S=k[b_1,\ldots,b_m,x_0,x_1,\ldots,x_n]$ to compute syzygies.
This very convenient setting allows performing computations in one big ring, as $B$ receives no special treatment in the Gröbner basis algorithm.
Similarly, if $B = k[b_1,\ldots,b_m]/L$ we can simply add any set of generators of $L$ to $I$ and proceed as above.

For the computability of syzygies over the ring $S=B[x_0,\ldots,x_n]$ it suffices to assume that $B$
 has effective coset representatives \cite[Theorem~4.3.15]{AL}.
A ring $B$ is said to have \emph{effective coset representatives} if for every ideal $L\unlhd B$ we can algorithmically find a set $T$ of coset representatives of $B/L$, such that for every $b \in B$ we can compute the unique $t \in T$ with $b+L=t+L$.
It is immediate that computable fields and the ring of integers $\Z$ have effective coset representatives.
Furthermore, if $B$ has effective coset representatives then also its residue class rings and polynomial rings there over.

We do not claim that our saturation-based algorithm can outperform the direct use of block-elimination orders.
In fact, our straightforward implementation in \texttt{homalg} (see \cite{homalg-project}) was significantly slower than direct elimination using a block-elimination order, where the Gröbner bases computations in both cases were delegated to \textsc{Singular} \cite{singular412}.
However, our implementation relies on \textsc{Singular}'s general procedures to compute syzygies, which are a priori not optimized for saturation with respect to a single indeterminate.

\section{Correctness of the algorithm}\label{sec:correctness}

Let $B$ be a unitial commutative ring\footnote{We do not need that $B$ is a polynomial ring to prove correctness.}, $R := B[x_1,\ldots,x_n]$ and $S := B[x_0,x_1,\ldots,x_n]$ polynomial rings over $B$, 
We view $S$ as a graded ring over $B$ equipped with its standard grading, i.e., $\deg(x_i) = 1$ for all $i=0,\ldots,n$, and $\deg(b) = 0 $ for all $b \in B \setminus \{0\}$.

\subsection{Homogenization}
We present an algorithm to compute homogenizations by using an arbitrary monomial order.
This algorithm is used as a subalgorithm for the elimination algorithm using an arbitrary monomial order in \cref{elimination}.

For $f \in R$ denote by $f^h$ the \emph{homogenization} of $f$ in $S$ w.r.t.\  the indeterminate $x_0$:
\[
  f^h := x_0^{\deg f} f \left( \frac{x_1}{x_0}, \ldots, \frac{x_n}{x_0} \right) \in S \mbox{.}
\]

Note that possible different degrees of the summands or cancelation of leading terms prevent the homogenization $f\mapsto f^h$ from being additive.
However, as a direct consequence of the definition one gets a slight generalization of additivity.
\begin{rmrk}[Generalized additivity] \label{rmrk:additivity}
  Let $f = \sum_i f_i \in R$ and $m := \max_i\{\deg f_i\}$.
  Then
  \[
    x_0^{m-\deg f} f^h = \sum_i x_0^{m-\deg f_i} f_i^h \mbox{.}
  \]
\end{rmrk}
If $B$ is a domain then the homogenization is multiplicative\footnote{This follows from $\deg(f g) = \deg(f)+\deg(g)$ if $B$ is a domain. If $B$ is not a domain then $\deg(f g) \leq \deg(f)+\deg(g)$: consider, e.g., the square of $2x+1$ in $(\Z/4\Z)[x]$.}; more generally:
\begin{rmrk}[Generalized multiplicativity] \label{rmrk:multiplicativity}
  Let $f, g\in R$. Then
  \[
    x_0^\ell (f g)^h = f^h g^h  \quad\mbox{with}\quad \ell := \deg f 
    + \deg g - \deg (f g) \geq 0 \mbox{.}
  \]
  If $B$ is a domain then $\ell = 0$, trivially.
\end{rmrk}
 
On the other hand the \emph{dehomogenization}
\[
  \phi : S \to R, \begin{cases} x_0 \mapsto 1, \\ x_i \mapsto x_i  & i \neq 0\end{cases}
\]
is an epimorphism of $B$-algebras with kernel $\langle x_0 - 1 \rangle$.
Moreover, $\phi(r^h) = r$ for all $r \in R$ and $\phi(s)^h=s$ for all homogeneous $s \in S$ with $s \notin \langle x_0 \rangle$.

The \emph{homogenization} of an ideal $I \unlhd R$ is the homogeneous ideal $I^h := \langle f^h \mid f \in I \rangle \unlhd S$, generated by the homogenization of all elements of $I$.
We now show that homogenized ideals are saturated 
w.r.t.\  $x_0$.
In \Cref{prop:sat0} we show the converse, namely that $I^h$ can be computed by such a saturation.

\begin{lemm}\label{lemm:homogenization_is_saturated}
  The homogenization $I^h\unlhd S$ of an ideal $I\unlhd R$ is saturated w.r.t.\  $x_0$, i.e.,
  \[
    I^h = I^h : x_0^\infty := \{ f \in S \mid \exists \ell \in \N_0: x_0^\ell f \in I^h \} \mbox{.}
  \]
\end{lemm}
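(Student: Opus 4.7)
The plan is to exploit the dehomogenization epimorphism $\phi : S \to R$ introduced just above. The inclusion $I^h \subseteq I^h : x_0^\infty$ is immediate from the definition (take $k=0$), so everything reduces to showing that $x_0^k f \in I^h$ forces $f \in I^h$.

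Since $I^h$ is a homogeneous ideal, it decomposes into graded pieces, and an element lies in $I^h$ iff each of its homogeneous components does; thus I may assume $f$ is homogeneous. Next, factor out the maximal power of $x_0$ and write $f = x_0^j \tilde f$ with $\tilde f$ homogeneous and $x_0 \nmid \tilde f$. The hypothesis then reads $x_0^{k+j}\tilde f \in I^h$.

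Apply $\phi$ to both sides. Since $\phi(x_0)=1$, this yields $\phi(\tilde f) \in \phi(I^h)$; and since $\phi$ is a surjective ring homomorphism sending each generator $g^h$ of $I^h$ to $g$ for $g \in I$, one has $\phi(I^h) = I$, so $\phi(\tilde f) \in I$. By definition of $I^h$ this gives $\phi(\tilde f)^h \in I^h$. Now invoke the identity $\phi(s)^h = s$ for homogeneous $s \notin \langle x_0 \rangle$ recalled in the preceding paragraph, applied to $s = \tilde f$: it yields $\tilde f = \phi(\tilde f)^h \in I^h$, and multiplying by $x_0^j$ gives $f \in I^h$ as desired.

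The one delicate point — and the reason for first splitting off the $x_0$-factor — is that $\phi(s)^h = s$ fails in general when $x_0 \mid s$, because dehomogenization irretrievably erases the $x_0$-exponent; this information must therefore be tracked separately as the prefactor $x_0^j$. No assumption on $B$ beyond being a unital commutative ring enters, consistent with the general setup of \Cref{sec:correctness}.
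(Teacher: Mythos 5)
Your proof is correct and follows essentially the same route as the paper's: reduce to homogeneous $f$ with $x_0 \nmid f$, apply the dehomogenization $\phi$ to land in $I$, then re-homogenize using $\phi(s)^h = s$ for homogeneous $s \notin \langle x_0 \rangle$. Your explicit factorization $f = x_0^j \tilde f$ simply spells out what the paper compresses into the remark that elements outside $\langle x_0 \rangle$ generate $I^h : x_0^\infty$.
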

This statement is the manifestation of the geometric fact that the closure $V(I^h) \subset \mathbb{P}^n_B$ of the vanishing set $V(I) \subset \mathbb{A}^n_B \subset \mathbb{P}^n_B$ does not include components in the hyperplane $V(x_0) \subset \mathbb{P}^n_B$ at infinity.
\begin{proof}
  It suffices to prove the inclusion $I^h \supset I^h : x_0^\infty$.
  Let $f \in I^h : x_0^\infty$ homogeneous.
  We can assume that $f \notin \langle x_0 \rangle$, since such elements generate $I^h : x_0^\infty$.
  Then there exists a nonnegative integer $\ell$ such that $x_0^\ell f \in I^h$, i.e., there exist $f_i \in I$ and homogeneous $s_i \in S$ such that $x_0^\ell f = \sum_i s_i f_i^h$.
  Applying the dehomogenization $\phi$ yields $\phi(f) = \phi(x_0^\ell f) = \sum_i \phi(s_i) f_i \in I$.
  As $\phi(f)\in I$ we finally have $f=\phi(f)^h\in I^h$, where the last equality holds since $f \notin \langle x_0 \rangle$ by the assumption.
\end{proof}

Homogenizing a generating set of an ideal does not generally yield a generating set of the homogenized ideal (cf.~\Cref{exmp:saturation}).
However, the necessary saturatedness condition from \Cref{lemm:homogenization_is_saturated} suffices in the following sense:

\begin{prop}\label{prop:sat0}
Let $G$ be a generating set of an ideal $I\unlhd R$.
Then
\[
   I^h = J : x_0^\infty
\]
for the homogeneous ideal $J := \langle g^h \mid g \in G \rangle \subset I^h \unlhd S$ generated by the homogenized elements of $G$.
\end{prop}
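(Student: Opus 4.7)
The plan is to prove the two inclusions separately, with the nontrivial direction being $I^h \subseteq J : x_0^\infty$, which I will establish by unwinding an expression of an element $f \in I$ as a $G$-combination and invoking the generalized additivity and multiplicativity remarks.

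For the easy inclusion $J : x_0^\infty \subseteq I^h$: since each $g^h$ lies in $I^h$, we have $J \subseteq I^h$, so $J : x_0^\infty \subseteq I^h : x_0^\infty$, and the latter equals $I^h$ by \Cref{lemm:homogenization_is_saturated}. This is essentially free.

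For the main inclusion $I^h \subseteq J : x_0^\infty$: since $J : x_0^\infty$ is an ideal and $I^h$ is generated by the elements $f^h$ for $f \in I$, it suffices to show that for every $f \in I$ there is a nonnegative integer $N$ with $x_0^N f^h \in J$. Given such an $f$, I would write $f = \sum_i c_i g_i$ with $c_i \in R$ and $g_i \in G$, set $m := \max_i \deg(c_i g_i)$, and apply \Cref{rmrk:additivity} to obtain
\[
  x_0^{m - \deg f} f^h = \sum_i x_0^{m - \deg(c_i g_i)} (c_i g_i)^h.
\]
Next I would replace each $(c_i g_i)^h$ by using \Cref{rmrk:multiplicativity}: multiplying both sides by a sufficiently large power $x_0^K$ (e.g.\ $K := \max_i (\deg c_i + \deg g_i - \deg(c_i g_i))$, which is nonnegative and finite), each term on the right becomes a monomial-in-$x_0$ multiple of $c_i^h g_i^h$, hence lies in $J$. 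Setting $N := K + m - \deg f$ gives $x_0^N f^h \in J$, as required.

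The main obstacle, and the reason the proof is not a one-liner, is precisely that $f \mapsto f^h$ is neither additive nor multiplicative over a general commutative base ring $B$; the generalized additivity and multiplicativity remarks are tailor-made to absorb the discrepancy into controlled powers of $x_0$, and the argument above merely organizes these powers into a single exponent $N$. No hypothesis on $B$ (e.g.\ being a domain) is needed, since the correction exponents are exactly what the two remarks quantify.
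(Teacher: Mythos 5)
Your proof is correct and follows essentially the same route as the paper: the easy inclusion via $J\subseteq I^h$ and \Cref{lemm:homogenization_is_saturated}, then expansion of $f\in I$ as a $G$-combination together with \Cref{rmrk:additivity} and \Cref{rmrk:multiplicativity} to produce an explicit power of $x_0$ clearing $f^h$ into $J$. The only cosmetic difference is bookkeeping: the paper picks a single exponent $m:=\max(\deg r_g+\deg g)$ that absorbs both correction powers at once, whereas you track the additivity correction ($m:=\max\deg(c_ig_i)$) and the multiplicativity correction ($K$) separately and combine them into $N=K+m-\deg f$; the two choices lead to the same conclusion.
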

Geometrically speaking, the vanishing set $V(J)$ (of the homogenization of a generating set of $I$) coincides with $V(I^h)$ up to possible extra components of $V(J)$ which are contained in the hyperplane $V(x_0)$ at infinity (see \Cref{exmp:saturation}).

\begin{proof}
  The inclusion $ I^h\supseteq J : x_0^\infty$ follows from the inclusion $I^h\supseteq J$ and \Cref{lemm:homogenization_is_saturated}.
  
  We now prove the reverse inclusion.
  Consider an element $f^h$ for $f \in I$ (by definition $I^h$ is generated by elements of this form).
  Since $G$ is a generating set of $I$ we can write $f = \sum_{g\in G'} r_g g$ for some $r_g \in R$ and a finite subset $G' \subseteq G$.
  Set $m := \max\{ \deg r_g + \deg g \mid g\in G' \}$.
  Then homogenizing the above $R$-linear combination of $f$ yields the $S$-linear combination
  \begin{align*}
    x_0^{m - \deg f} f^h &= \sum_{g\in G'} x_0^{m-\deg(r_g g)} (r_g g)^h & \mbox{(\Cref{rmrk:additivity})\phantom{.}} \\
    &= \sum_{g\in G'} x_0^{m-(\deg r_g + \deg g)} r_g^h g^h \in J & \mbox{(\Cref{rmrk:multiplicativity}).}
  \end{align*}
  Hence, $f^h\in J : x_0^\infty$.
\end{proof}

Note that if our base ring $B$ is a field, proofs of \Cref{lemm:homogenization_is_saturated} and \Cref{prop:sat0} are given in \cite[Corollary 4.3.7, Corollary 4.3.8.a]{KR2}.
See also \cite[Exer.~15.40]{Eis}.

\subsection{Examples}
If $B = k[b_1,\ldots,b_m]$ is a polynomial ring over a field $k$, then the above saturation can be computed by Gröbner bases algorithms with respect to any monomial order.

Homogenizing a Gröbner basis of the ideal $I \unlhd R$ w.r.t.\  the \emph{block-elimination order} $b_i \prec x_j$ already yields a generating set of $I^h \unlhd S$, i.e., there is no need for subsequent saturation (cf.~\cite[Prop.~15.31]{Eis} or \cite[Exer.~1.6.19]{AL}).

However, in our alternative approach to homogenization using saturation (\Cref{prop:sat0}) we are free in choosing a monomial order.
One could proceed as follows:
Use the \emph{degree-reverse lexicographical order} (degrevlex) in $S$ to compute the saturation, as it is the fastest order on average.
When making \Cref{prop:sat0} algorithmic, we even suggest to start with a Gröbner basis $G$ of $I$ w.r.t.\  degrevlex in $R$ (since then for $B=k$ no subsequent saturation is required\footnote{This is \cite[Theorem 8.4.4 and Exer.~8.4.5]{CLO}, a special case of \cite[Prop.~15.31]{Eis}.}):

\begin{exmp} \label{exmp:saturation}
  Consider $B=k=\Q$, $R=\Q[x_1,x_2]$, $G := \{ x_1^2, x_2 - x_1^2 \} \subset R$, and $I = \langle G \rangle$.
  Then $J = \langle G^h \rangle = \langle x_1^2, x_0 x_2 - x_1^2 \rangle = \langle x_1^2, x_0 x_2 \rangle \subsetneq \langle x_1^2, x_2 \rangle = I^h$.
  However, the saturation $J : x_0^\infty = I^h$, complying with \Cref{prop:sat0}.
  If we would have started with the Gröbner basis $G' := \{ x_1^2, x_2 \}$ of $I$ the saturation step would've been obsolete.
\end{exmp}

Unless $B=k$, the next example shows that homogenizing a Gröbner basis of $I$ w.r.t.\ degrevlex (contrary to the above mentioned block-elimination order) does not generally yield a generating set of $I^h$ and a subsequent saturation is necessary.

\begin{exmp} \label{exmp:twisted_cubic}
  Consider $B=\Q[b_1,b_2,b_3]$, $R = B[t]$, and $I = \langle b_i - t^i  \mid i=1,2,3 \rangle \lhd R$ the ideal defining the twisted cubic.
  A Gröbner basis of $I$ w.r.t.\  degrevlex is $G := \{ b_1 - t, t^2 - b_2, b_2 t - b_3, b_2^2 - b_3 t \}$.
  Then $J = \langle b_1 s - t, t^2 - b_2 s^2, b_2 t - b_3 s, b_2^2 s - b_3 t \rangle \lhd S = B[s,t]$ and for obvious degree reasons $J$ cannot contain elements of degree $0$ in $s$ and $t$, i.e., $J \cap B = \{ 0 \}$.
  This example shows that homogenizing a Gröbner basis of $I$ w.r.t.\  the degrevlex does not generally yield a generating set of $I^h = J : \langle s \rangle^\infty = \langle b_1 s - t, b_1 b_2 - b_3, b_1^2 - b_2 \rangle$.
\end{exmp}

\subsection{Elimination via homogenization}\label{elimination}
Homogenizing an ideal $I\unlhd R$ does not add additional elements from $B$ to the ideal, allowing us to use homogenization for elimination, which is our main goal:

\begin{lemm}\label{lemm:intersection0}
  For an ideal $I\unlhd R$ we have $I^h \cap B = I \cap B$.
\end{lemm}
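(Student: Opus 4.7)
The plan is to prove both inclusions of the equality $I^h \cap B = I \cap B$ separately, leaning on the dehomogenization map $\phi \colon S \to R$ introduced right before \Cref{lemm:homogenization_is_saturated}.

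For the inclusion $I \cap B \subseteq I^h \cap B$, I would observe that any $b \in B$ is homogeneous of degree $0$, so its homogenization equals itself: $b^h = b$. Hence if $b \in I$, then $b = b^h \in I^h$, and trivially $b \in B$, giving $b \in I^h \cap B$. This is essentially a one-line verification.

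For the reverse inclusion $I^h \cap B \subseteq I \cap B$, I would take $b \in I^h \cap B$ and write it as an $S$-linear combination $b = \sum_i s_i f_i^h$ with $f_i \in I$ and $s_i \in S$, using that $I^h = \langle f^h \mid f \in I \rangle$. Applying the $B$-algebra homomorphism $\phi$ (which sends $x_0 \mapsto 1$ and fixes $B$ and the other $x_j$) yields
\[
  b = \phi(b) = \sum_i \phi(s_i)\, \phi(f_i^h) = \sum_i \phi(s_i)\, f_i \in I,
\]
since $\phi(r^h) = r$ for all $r \in R$, as noted in the excerpt. Combined with $b \in B$, this shows $b \in I \cap B$.

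I do not anticipate a serious obstacle here; the only subtle point is remembering that $b^h = b$ for $b \in B$ (because $\deg b = 0$), which is what makes the first inclusion trivial, and that $\phi$ is a $B$-algebra map fixing $B$, which is what makes the second inclusion reduce to a direct substitution. The lemma is really a convenient algebraic shadow of the geometric picture sketched in the introduction: restricting a projective scheme back to affine space, or intersecting with the base, commutes appropriately with homogenization.
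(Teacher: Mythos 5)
Your argument is correct and is essentially identical to the paper's: the easy inclusion comes from the observation that elements of $B$ are homogeneous of degree zero (so they homogenize to themselves), and the reverse inclusion is obtained by writing $b \in I^h \cap B$ as an $S$-linear combination $\sum_i s_i f_i^h$ with $f_i \in I$ and applying the dehomogenization $\phi$. The paper also gives a second, more categorical proof (via \Cref{rmrk:algebras}, comparing kernels of the structure morphisms $B \to R/I$ and $B \to S/I^h$), but the main proof matches yours.
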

\begin{proof}
  The homogeneity of elements of $B$ yields the inclusion $I^h \cap B \supset I \cap B$.
  So we only need to show the inclusion $I^h \cap B \subset I$.
  To this end let $b \in I^h \cap B$.
  As $b\in I^h$ there exist finitely many $f_i \in I$ and homogeneous $s_i \in S$ such that $b = \sum_i s_i f_i^h$.
  Applying the dehomogenization $\phi$ yields $b = \phi(b) = \sum_i \phi(s_i) \phi(f_i^h) = \sum_i \phi(s_i) f_i \in I$.
\end{proof}

Since $B$ is the degree zero part of $S$, this intersection $I^h \cap B$ is just the degree zero part of $I^h$.
We shortly explain on how to read off this degree zero part of any homogeneous ideal:

\begin{rmrk}
Let $H \unlhd S$ be a homogeneous ideal.
By definition, a polynomial is contained in $H$ iff all of its homogeneous parts are.
Hence, given a generating set $G$ of $H$, the set $G':=\{p_i\mid p\in G,i\in\Z_{\ge0}\}$ of the homogeneous parts of $G$ is also a generating set of $H$.
Removing a polynomial $p \in G'$ of degree $d$ from $G'$ does not affect the parts of degree smaller than $d$, i.e., $H$ is equal to $\langle G'\setminus\{p\}\rangle\unlhd S$ in all degrees smaller $d$.
In particular, $G'_0:=\{p\in H' \mid \deg(p)=0\} = \{ p_{\mid x_0=0,\ldots,x_n=0} \mid p \in G \}$ is a generating set of the degree $0$ part $H\cap B$.
\end{rmrk}

The following corollary can be seen as an elimination in the case of projective fibers, and it is much simpler than the usual affine elimination.

\begin{coro}\label{lemm:degreezeropart}
  Let $I\unlhd R$ be an ideal.
  Evaluating the indeterminates $x_0,\ldots,x_n$ to zero in a generating set of $I^h \unlhd S$ yields a generating set of $I^h\cap B = I \cap B$.
\end{coro}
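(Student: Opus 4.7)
The plan is to combine the two ingredients that the excerpt has already put in place: \Cref{lemm:intersection0} identifies the elimination ideal $I\cap B$ with the degree-zero part $I^h\cap B$ of the homogenization, and the preceding Remark explains how to read off the degree-zero part of an arbitrary homogeneous ideal from any generating set. So the corollary should just be a bookkeeping combination of these two facts, with no further ideal-theoretic content.

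Concretely, I would first recall that the homogenization $I^h = \langle f^h \mid f \in I\rangle\unlhd S$ is by construction generated by homogeneous elements, hence is a homogeneous ideal in the standard-graded polynomial ring $S = B[x_0,\ldots,x_n]$. Let $H$ be any generating set of $I^h$. Since the grading on $S$ places $B$ in degree $0$ and each $x_i$ in degree $1$, the degree-zero component of any polynomial $p\in S$ is exactly $p_{\mid x_0=0,\ldots,x_n=0}\in B$, i.e., the constant term. Thus evaluating all $x_i$ to zero in the elements of $H$ produces precisely the set $H_0 := \{p_0 \mid p\in H\}$ of their degree-zero homogeneous components.

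Next I would invoke the preceding Remark: since $I^h$ is homogeneous, the full decomposition $\{p_i \mid p\in H,\ i\in\Z_{\geq 0}\}$ of the elements of $H$ into homogeneous parts also generates $I^h$; and after removing every homogeneous summand of positive degree one sees that the degree-zero summands $H_0$ generate the degree-zero component $I^h\cap B$ of $I^h$ as a $B$-module (equivalently as an ideal of $B$). Combining this with \Cref{lemm:intersection0} yields $\langle H_0\rangle_B = I^h\cap B = I\cap B$, which is exactly the claim.

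There is no genuine obstacle here; the only subtlety worth flagging is that one must use the homogeneity of $I^h$ (and not merely pick a random generating set of an arbitrary ideal and set the $x_i$ to zero), since the Remark is where the ``pick off the degree-zero summands'' step is legitimized. Once that is acknowledged, the corollary follows immediately from \Cref{lemm:intersection0} together with the Remark.
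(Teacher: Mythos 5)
Your proposal is correct and follows exactly the paper's intended route: the corollary is presented without a separate proof precisely because it is the immediate combination of the preceding Remark (degree-zero parts of a generating set of a homogeneous ideal generate its degree-zero component) with \Cref{lemm:intersection0} ($I^h\cap B = I\cap B$). Nothing is missing, and your caveat about needing homogeneity of $I^h$ is the right thing to flag.
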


\Cref{prop:sat0} gives an algorithm to compute a generating set for $I^h$ as a Gröbner basis of $J : x_0^\infty$ by saturating $J$ w.r.t.\ the indeterminate $x_0$.
We again emphasize that we do not need any elimination order, as a saturation can be computed by iterated syzygies w.r.t.\  any order.
By \Cref{lemm:intersection0} and \cref{lemm:degreezeropart} we can then compute the elimination $I \cap B$ by evaluating the $x_i$ to zero in $I^h$.

\begin{exmp}
Continuing \Cref{exmp:twisted_cubic} we get $I \cap B  = I^h \cap B = \langle b_1 b_2 - b_3, b_1^2 - b_2 \rangle \lhd B$ (with Gröbner basis $\{ b_1 b_2 - b_3, b_1^2 - b_2, b_2^2 - b_1 b_3 \}$).
\end{exmp}

\subsection{Generalization to submodules}\label{subsection_submodules}
  All statements and above proofs easily generalize to the case of submodules of finite rank free modules.
  For $\ell \in \N_0$ let $I \leq R^\ell$ be an $R$-submodule generated by the finite set $G \subset R^\ell$.
  For $f=(f_1,\ldots, f_\ell) \in R^\ell$ define $f^h = (f_1,\ldots, f_\ell)^h := (x_0^{i_1} f_1^h, \ldots, x_0^{i_\ell} f_\ell^h)$, where $i_j$ is determined by the equality $i_j + \deg(f_j) = \deg(x_0^{i_j} f_j) = \max\{ \deg(f_k) \mid k = 1,\ldots,\ell \}$.
  The homogenization of $I$ is the graded $S$-submodule $I^h = \langle f^h \mid f \in I \rangle \leq S^\ell$ and the saturation of $J := \langle f^h \mid f \in G \rangle$ by $x_0$ is $J : x_0^\infty = \{ f \in S^\ell \mid \exists k \in \N_0: x_0^k f \in J \}$ w.r.t.\  $x_0$.
  Elimination now means computing the intersection $I \cap B^\ell = I^h \cap B^\ell$ with $B^\ell := \{ (r_1,\ldots, r_\ell) \mid r_i \in B \} \subseteq R^\ell$, which reduces to evaluating the indeterminates $x_0,\ldots, x_n$ to zero in a generating set of $I^h$.

\appendix

\section{Alternative proofs} \label{sec:alt_proofs}

In this section we give some alternative, more conceptual proofs of \Cref{prop:sat0} and \Cref{lemm:intersection0}.

\begin{proof}[Alternative proof of \Cref{prop:sat0}]
 Define the $B$-algebra homomorphisms
 \begin{center}
  \begin{tikzpicture}
    \coordinate (r) at (1.8,0);
    \coordinate (d) at (0,-0.6);
    
    \node (R) {$R$};
    \node (Rx) at ($(R)+2*(r)$) {$S_{x_0}$};
    \node (S) at ($(Rx)+2*(r)$) {$S$.};
    \node (Re) at ($(R) + (d) + 0.7*(r)$) {$x_i$};
    \node (Rxe1) at ($(Rx) + (d) - 0.7*(r)$) {$\frac{x_i}{x_0}$};
    \node (Rxe2) at ($(Rx) + (d) + 0.7*(r)$) {$\frac{x_i}{1}$};
    \node (Se) at ($(S) + (d) - 0.7*(r)$) {$x_i$};
    
    \draw[right hook-stealth'] (R) -- node[above] {$\alpha$} (Rx);
    \draw[left hook-stealth'] (S) -- node[above] {$\beta$}  (Rx);
    \draw[draw = none] (Re) -- node{$\mapsto$} (Rxe1);
    \draw[draw = none] (Se) -- node{$\mapsfrom$} (Rxe2);
  \end{tikzpicture}
  \end{center}
  We write $\alpha_{\ast}$ and $\beta_{\ast}$ for the operators that send an ideal to its extension with respect to $\alpha$ and $\beta$, respectively.
  Now, given any subset of elements $F \subset R$, we have
  \begin{align*}
   \beta^{-1}\alpha_{\ast}\big( \langle f \mid f \in F \rangle_R \big) &= \beta^{-1}\big( \langle f\left(\frac{x_1}{x_0}, \dots \frac{x_n}{x_0}\right) \mid f \in F \rangle_{S_{x_0}} \big) \\
   &= \beta^{-1}\big( \langle f^h \mid f \in F \rangle_{S_{x_0}} \big) \\
   &=\beta^{-1}\beta_{\ast} \big( \langle f^h \mid f \in F \rangle_{S} \big) \\
   &= \langle f^h \mid f \in F \rangle_S : x_0^\infty.
  \end{align*}
  On the one hand, setting $F = I$ yields
	\begin{align*}
	\beta^{-1}\alpha_{\ast}( I ) = I^h : x_0^\infty = I^h,
	\end{align*}
  where the second equality follows from \Cref{lemm:homogenization_is_saturated}, and on the other hand, setting $F = G$ gives us
  \begin{align*}
   \beta^{-1}\alpha_{\ast}( I ) = \langle g^h \mid g \in G \rangle : x_0^\infty = J : x_0^\infty.
  \end{align*}
\end{proof}

\begin{rmrk}\label{rmrk:algebras}
  Let $B$ be a commutative ring and let $\overline{R}$, $\overline{S}$ be $B$-algebras with
  structure morphisms $\rho: B \rightarrow \overline{R},\  b \mapsto b\cdot 1_{\overline{R}}$
  and $\sigma: B \rightarrow \overline{S},\  b \mapsto b \cdot 1_{\overline{S}}$.
  Assume that there exists  a $B$-algebra homomorphism $\overline{\phi}: \overline{S} \rightarrow \overline{R}$. 
  Then
  \[
   \ker \sigma \subset \ker \rho \mbox{.}
  \]
\end{rmrk}
\begin{proof}
  The statement that $\overline{\phi}$ is a morphism of $B$-algebras means that $\overline{\phi} \circ \sigma = \rho$ and hence $\ker (\overline{\phi} \circ \sigma) = \ker \rho$.
  This implies the claim.
  
  The categorical formulation would be
  \begin{center}
  \begin{tikzpicture}
    \coordinate (r) at (1.5,0);
    \coordinate (d) at (0,-1.3);
    
    \node (B1) {$B$};
    \node (B2) at ($(B1)+(d)$) {$B$};
    \node (R) at ($(B1)+(r)$) {$\overline{R}$};
    \node (S) at ($(B2)+(r)$) {$\overline{S}$};
    \node (kerrho) at ($(B1)-(r)$) {$\ker \rho$};
    \node (kersigma) at ($(B2)-(r)$) {$\ker \sigma$};
    
    \draw[-stealth'] (B1) -- node[above] {$\rho$} (R);
    \draw[-stealth'] (B2) -- node[above] {$\sigma$}  (S);
    \draw[right hook-stealth'] (kerrho) -- node[above] {$\kappa_\rho$} (B1);
    \draw[right hook-stealth'] (kersigma) -- node[above] {$\kappa_\sigma$}  (B2);
    \draw[-stealth'] (S) -- node[right] {$\overline{\phi}$} (R);
    \draw[double distance=0.2em] (B1) -- (B2);
    \draw[left hook-stealth'] (kersigma) -- node[left] {$\iota$} (kerrho);
  \end{tikzpicture}
  \end{center}
  to construct the mono $\iota$ as the unique lift of $\kappa_\sigma$ along the kernel embedding $\kappa_\rho$, which exists since $\rho \circ \kappa_\sigma = 0$ by the commutativity of the right square.
  Since $\kappa_\rho \circ \iota = \kappa_\sigma$ is a mono it follows that $\iota$ is itself a mono.
\end{proof}

\begin{proof}[Alternative proof of \Cref{lemm:intersection0}]
 We define the $B$-algebras $\overline{R} := R/I$ and $\overline{S} := S/I^h$,
 and denote their structure morphisms by $\rho$, $\sigma$, respectively.
 Their kernels are given by $I \cap B$ and $I^h \cap B$, respectively,
 and the inclusion $I^h \cap B \supset I \cap B$ was the easy inclusion in the first proof of \Cref{lemm:intersection0}.
 Furthermore, the dehomogenization $\phi: S \rightarrow R$ induces a $B$-algebra homomorphism
 $\overline{\phi}: \overline{S} \rightarrow \overline{R}$.
 We can now apply \Cref{rmrk:algebras} which completes the proof.
\end{proof}

\def\cprime{$'$} \def\cprime{$'$} \def\cprime{$'$} \def\cprime{$'$}
  \def\cprime{$'$}
\providecommand{\bysame}{\leavevmode\hbox to3em{\hrulefill}\thinspace}
\providecommand{\MR}{\relax\ifhmode\unskip\space\fi MR }
\providecommand{\MRhref}[2]{%
  \href{http://www.ams.org/mathscinet-getitem?mr=#1}{#2}
}
\providecommand{\href}[2]{#2}

\end{document}
